\documentclass[10pt]{article}
\textwidth= 5.00in
\textheight= 7.4in
\topmargin = 30pt
\evensidemargin=0pt
\oddsidemargin=55pt
\headsep=17pt
\parskip=.5pt
\parindent=12pt
\font\smallit=cmti10
\font\smalltt=cmtt10

\usepackage{amssymb,latexsym,amsmath,epsfig,amsthm} 
\usepackage{subcaption}
\usepackage{multicol}
\usepackage{listings}  
\usepackage{algorithm}
\usepackage[noend]{algpseudocode}
\usepackage{color}
\usepackage{array}   
\newcolumntype{L}{>{$}l<{$}}

\newcommand{\mul}{\times}
\newcommand{\ccat}{\oplus}
\newcommand{\np}{}

\newcommand{\para}[1]{\vspace*{.2cm}\noindent\textbf{#1}. }

\makeatletter

\renewcommand\section{\@startsection {section}{1}{\z@}
{-30pt \@plus -1ex \@minus -.2ex}
{2.3ex \@plus.2ex}
{\normalfont\normalsize\bfseries\boldmath}}

\renewcommand\subsection{\@startsection{subsection}{2}{\z@}
{-3.25ex\@plus -1ex \@minus -.2ex}
{1.5ex \@plus .2ex}
{\normalfont\normalsize\bfseries\boldmath}}

\renewcommand{\@seccntformat}[1]{\csname the#1\endcsname. }

\makeatother

\newtheorem{lemma}{Lemma}


\newtheorem{definition}{Definition}

\definecolor{mygreen}{rgb}{0,0.6,0}
\definecolor{mygray}{rgb}{0.5,0.5,0.5}
\definecolor{mymauve}{rgb}{0.58,0,0.82}

\lstset{ %
  backgroundcolor=\color{white},   
  basicstyle=\footnotesize,        
  breaklines=true,                 
  captionpos=b,                    
  commentstyle=\color{mygreen},    
  escapeinside={\%*}{*)},          
  keywordstyle=\color{blue},       
  stringstyle=\color{mymauve},     
}

\begin{document}

\begin{center}
\uppercase{\bf Crazy Sequential Representations of Numbers for Small Bases}
\vskip 20pt
{\bf Tim Wylie\footnote{This research was supported in part by National Science Foundation Grant CCF-1817602.}}\\
{\smallit University of Texas - Rio Grande Valley\\ Edinburg, TX 78539-2999, USA}\\
{\tt timothy.wylie@utrgv.edu}\\ 
\end{center}
\vskip 30pt


\centerline{\bf Abstract}

\noindent
Throughout history, recreational mathematics has always played a prominent role in advancing research. Following in this tradition, in this paper we extend some recent work with crazy sequential representations of numbers$-$ equations made of sequences of one through nine (or nine through one) that evaluate to a number. All previous work on this type of puzzle has focused only on base ten numbers and whether a solution existed. We generalize this concept and examine how this extends to arbitrary bases, the ranges of possible numbers, the combinatorial challenge of finding the numbers, efficient algorithms, and some interesting patterns across any base. For the analysis, we focus on bases three through ten. Further, we outline several interesting mathematical and algorithmic complexity problems related to this area that have yet to be considered.

\pagestyle{myheadings}
\markright{\smalltt \hfill}
\thispagestyle{empty}
\baselineskip=12.875pt
\vskip 30pt

\section{Introduction}

One constant theme throughout the history of mathematics is the lure of and the desire to create and solve puzzles. Countless areas of research have been created and extended based on an investigation into recreational mathematics. The study of games and puzzles has become a serious area in its own right often providing insights into much deeper topics.

In this paper we look at an area of recreational mathematics based in number theory and combinatorics began in 2013 by Taneja \cite{Taneja:2013:1302.1479v1} and continued in  \cite{Taneja:2013:1302.1479v3,Taneja:2013:1302.1479v2,Taneja:2013:1302.1479v4,Taneja:2013:1302.1479v5}. The crazy sequential representation of a number is an arithmetic expression, equal to the value of the number, that contains the digits of a base in order (ascending or descending) such as 
\[
\hspace*{.9cm} 3227 = 123 + 45 \mul 67+ 89 \hspace*{.3cm} \mbox{ and } \hspace*{.3cm} 3227 = 9 + 87 + 6 + 5^4 \mul (3+ 2)\mul 1.
\]
This representation is often not unique. The original work looked at expressions with only addition and multiplication as well as concatenation and exponentiation\footnote{Taneja used the term `potentiation' instead, which comes from the translated word used for exponents.}. Taneja extended this work by also allowing subtraction and division, and was able to find equations for all numbers $1-11111$ with one exception: an ascending equation for 10958. Without concatenation and exponentiation, we could look at group operations to define possible values, but these two operations do not provide closure.

There are examples of this kind of representation of integers at least as far back as 1917 in a famous puzzle book by Dudeney \cite{Dudeney:1917:BOOK}, and also in another recreational book by Madachy \cite{Madachy:1966:BOOK} from 1966. Both of these works only focused on the number 100 and used other operations such as factorials and square roots, as well as decimals, etc. Taneja was unaware of these books in his original work, and discovered them later while working on the updated version.

Our focus in this work is to look at possible numbers in other bases- specifically bases less than 10. We also summarize the work related to base 10 and give an exhaustive proof that under Taneja's rules, 10958 is indeed impossible. We follow previous convention and only allow addition, concatenation, exponentiation, multiplication, division, and negation\footnote{Taneja specified subtraction, but we use a broader operation, and we show that arbitrary negation is still not sufficient for 10958.} along with precedence constraints (parentheses). 

We can examine the limitations of specific operations, and how the possible results are affected by a change of base. Here, we focus on what is possible within a given base. As an example, Table \ref{tab:overview} shows for each base less than 10 the first positive integer that is impossible for increasing and decreasing representations as well as the first positive integer that can not be sequentially represented either increasingly or decreasingly. 


\begin{table}[t]
    \centering
    \footnotesize
    \renewcommand{\arraystretch}{1.2}
    \begin{tabular}{| c | c | c | c |}
        \hline
        \textbf{Base} & \textbf{Increasing} & \textbf{Decreasing} & \textbf{Neither} \\ \hline
        3 & 0 & 0  & 0\\ \hline
        4 & 13& 11  & 16\\ \hline
        5 & 27& 17 & 27\\ \hline
        6 & 67& 77  & 92\\ \hline
        7 & 260 & 262 & 292\\ \hline
        8 & 614 & 809  & 1192 \\ \hline
        9 & 3293 & 4570  & 5414 \\ \hline
        10 & 10958 & 14324  & 21212 \\ \hline        
    \end{tabular}
    \caption{A brief overview of the first integers that can not be sequentially represented under the defined operations for bases  $3-10$.} \label{tab:overview}
\end{table}


Historically, these kind of derivations were done tediously and slowly, and Taneja's work also has this flavor with only using a program to find a few of the difficult numbers \cite{Taneja:2013:1302.1479v5}. Our approach leverages modern computational power and algorithmic techniques to bring this topic squarely within computational mathematics and search all possible combinations. We discuss these techniques and upper bounds in the paper. 

A brute force approach to a problem like this has generally been classified as computational mathematics - there is a point for many problems at which the number of possible combinations becomes too large for a human, or humanity, to check by hand in any reasonable amount of time. This has become more common with efforts to verify and prove other long open questions in mathematics such as the Kepler Conjecture \cite{Hales:2006:DCG,flyspeck:2014:WEB,Hales:2008:CORR}, the Boolean Pythagorean Triples problem \cite{Heule:2016:LNCS}, finding Ramsey numbers \cite{Graham:1990:BOOK,Radziszowski:2014:EJOC,Radziszowski:1995:JGT}, the Happy Ending problem \cite{Morris:2000:BAMS,Szekeres:2006:ANZIAM}, the 2-PATS problem \cite{Kari:2017:A}, and many others where brute-force exhaustive-search solutions were required.

Fortunately for us, this problem can also be approached with dynamic programming through calculating substrings that appear in multiple equations. This recurrence relation yields an efficient solution allowing an exhaustive examination within a reasonable amount of time. For most of the bases in our study (3-9), even basic laptops are sufficient to check the millions of combinations. For base ten, we utilized some research servers due to the high memory requirements. The program required around 20 gigabytes of memory to run, but the the time was less than two hours.

In the next section we give the background and definitions necessary. We then overview the approach and algorithms used in this research in Section \ref{sec:algorithms}. We discuss the small bases 2, 3, and 4 in Section \ref{sec:toosmall}, and then the more substantial possibilities of bases 5-9 in Section \ref{sec:small}. Section \ref{sec:b10} covers what is known about base 10 and the missing number 10958.
Finally, in Section \ref{sec:fun} we outline several interesting mathematical and computational open problems related to their study and conclude.

\section{Preliminaries} \label{sec:preliminaries}
We generalize the previous definitions with negation instead of subtraction, an explicit concatenation operator, and adding parentheses. 

\begin{definition}[Crazy Sequential Representation]
Given a number $n \in \mathbb{R}$, an increasing crazy sequential representation of $n$ in base $b$ is an equation using the sequence of numbers $\langle 1,2,\dots ,b-1 \rangle$ (decreasing being $\langle b-1, \dots , 2, 1 \rangle$) with the following operations allowed between any two of the numbers.
Given two real numbers $x,y \in \mathbb{R}$ we define the following allowable operations:
    \begin{itemize}
        \item[$+$] Addition: $x + y$ resulting in the sum of the two numbers.
        \item[$-$] Negation: $-x$ is allowable as well as the negation of an expression $-(\dots)$.  Addition with a negative is also equivalent to subtraction in this context, so subtraction is omitted from the list of operations.
        \item[$\mul$] Multiplication: $x \mul y$ resulting in their product.
        \item[$/$] Division: $x/y$ giving the fraction.
        \item[$a^b$] Exponentiation: $x^y$ meaning $x$ to the $y^{th}$ power.
        \item[$xy$] Concatenation: $xy$ meaning the number $xy$ in the given base (e.g., $12_3=5_{10}$ ). There are many standard symbols used for this operation. We will use $\ccat$ when we need to explicitly show it, otherwise it will be omitted when clear by context- generally $xy$ will be preferred instead of $x\ccat y$. 
        \item[$()$] Grouping: arbitrary parentheses are allowed with derivations following the standard rule that expressions inside parentheses are evaluated first.
    \end{itemize}
\end{definition}
%


One goal of Taneja's work is to minimize the number of operations used for a given representation. Thus, the original work \cite{Taneja:2013:1302.1479v1} focused on numbers derivable from simply concatenation, addition, multiplication and exponentiation. Later work to add missing numbers included division and subtraction  \cite{Taneja:2013:1302.1479v3,Taneja:2013:1302.1479v2,Taneja:2013:1302.1479v4,Taneja:2013:1302.1479v5}. We have also opted to generally prefer those original operations in the expression chosen when multiple expressions exists for a given number, as well as simplicity and elegance.

\para{Explicit Concatenation}
An issue with the way Taneja uses concatenation is that it is only allowed before evaluating the expression. This means $12$ is allowed as twelve (or $1\ccat 2$), but $(1 + 2) \ccat 3$ is not allowed to be evaluated as 33. This is the only defined operation not allowed during evaluation. If we allow it, several other numbers are possible, including 10958 in base 10. In the results, all expressions using this deviant version are colored red and use the $\ccat$ symbol explicitly. Our approach did not consider these solutions either, and thus there may be solutions of this form to some of the values listed without a solution. 

\subsection{Combinatorics}


In calculating an upper bound we are looking at the maximum amount of different numbers that could be represented in that base.
The number of parse trees that can be generated with binary operators tells us the number of ways to distribute the operations. If we, for the moment, only consider a single operation, this is the well-known Catalan numbers. Another view more relevant is the number of ways to insert $n-1$ pairs of parentheses in a word of $n$ letters. e.g., for $n=3$ ($t(2)$) there are 2 ways: $((ab)c)$ or $(a(bc))$ \cite{Guy:1973:AMM}. The Catalan numbers can be recursively derived by the following equation with $t(0)=1$ and $t(1)=1$.

\begin{equation}
t(n) = \sum_{i=1}^n t(i-1)t(n-i)
\end{equation}

Thus, for the bases considered here, we have $t:(2, 3,\dots ,9) \implies
(2, 5$, $14, 42$, $132$, $429$, $1430,$ $4862)$. 
This gives the number of ways to group the operands (sequential numbers), and then we must consider the number of operators allowed. We allow five distinct operations as defined above: addition, multiplication, division, exponentiation, and concatenation (subtraction will be handled later). This gives $5^{n-1}$ ways to place the operations on $n$ operands. For base $b$, we therefore have $5^{b-2}$ since we exclude 0 in the representation and only use $1, \dots, (b-1)$.

The last issue to deal with is negation. If we only allow subtraction, then the number of operations is $6^{n-1}$, however, we also allow negation. Thus, expressions such as $-(-4+5)$ are also allowed. Thus, for each of the parentheses or numbers, we could negate it, which adds all possible combinations of negations over the parentheses and numbers. This means we can also reduce our operations to only 5 (since we will look at adding the negated number instead). Thus, we have the power set of $n$ possible ways to add negatives to the numbers for $n$ operands, and the power set of $\{1,2,\dots,n-1\}$ for possible ways to add negatives to the parentheses (for $n$ numbers, we need at most $n-1$ parenthesis for binary operations). Since for base $b$, we have $n=b-1$, when we include all possibilities, there is an upper bound for the   
combinations for $n$ numbers given base $b$.

\begin{align}
C(n) &= 5^{n-1} \mul t(n) \mul 2^{n} \mul 2^{n-1} \\
&=5^{n-1} \mul t(n) \mul 2^{2n-1}, \mbox{           or in terms of } b\\
C(b) &= 5^{b-2} \mul t(b-1) \mul 2^{2b-3}.
\end{align}

The values for bases $3-10$ are shown in Table \ref{tab:basecomb}. Note that the vast majority of these combinations do not yield integers, however, the numbers are small enough to output all possible numbers and then check the integral ones. 
Many of these results are duplicates with only parenthetical differences, but the number of combinations is still well within computational power to brute force every possibility even if many are duplicates. For larger bases, an examination of the unique parse trees would reduce many of the duplicates caused by analyzing strings. 

\begin{table}[t]
    \centering
    \footnotesize
    \renewcommand{\arraystretch}{1.4}
    \begin{tabular}{|r| c | c |c |c |} \hline
        \textbf{Base $b$}           & \textbf{3}  & \textbf{4}    & \textbf{5}                 & \textbf{6}     \\ \hline
        \textbf{Combinations $C(b)$}   & 80 & 4000 & $2.24 \times 10^5$  & $1.344 \times 10^7$  \\ \hline 
        \multicolumn{5}{r}{ } \\ \hline
        \textbf{Base $b$} & \textbf{7}       & \textbf{8}        & \textbf{9}         & \textbf{10} \\ \hline
        \textbf{Combinations} $C(b)$ & $8.448 \times 10^{8}$ & $5.4912 \times 10^{10}$ &  $3.6608 \times 10^{12}$  & $2.489344 \times 10^{14}$ \\  
        \hline
    \end{tabular}
    \caption{The upper bound on the number of combinations for crazy sequential representations for a given base, which is the maximum amount of possible numbers that could be represented.}
    \label{tab:basecomb}
\end{table}

\section{Algorithms} \label{sec:algorithms}
At a high-level, in order to find all the numbers possible for a given base, an algorithm such as Algorithm \ref{alg:finddp} can be run. List the numbers from 1 to $b-1$ (or $b-1$ down to 1), and then check for all valid expressions with the given operations. This includes both the removal of any operation (concatenation) and the possibility of precedence in operations (parentheses). 


There are several notes of interest related to actual implementation. These include finding all binary partitions (and how this changes with concatenation), negation of terms, evaluation in the given base, and processing such large amounts of data. We cover these in the analysis of Algorithm \ref{alg:finddp}, which is a dynamic programming solution to the problem. By utilizing a dictionary of substrings, we can exponentially reduce the number of computations necessary.

\begin{algorithm}[t]
\footnotesize
\begin{algorithmic}[1]
 \Function {FindExpressions}{$base, low, high \in \mathbb{Z}^+$}
    \If{low $\neq$ high}
        \State $T = \{\}$ 
        \State numstr $\leftarrow$ CASTSTR($low$) $\ccat \dots \ccat$ CASTSTR($high$)
        \State catnum $\leftarrow$ CASTNUM(numstr)
    \State $T \leftarrow T \cup (catnum, numstr) \cup (-catnum, $``$ - $''$\ccat numstr)$
   
    \ForAll{$low \leq k \leq high$ }
        \State $L \leftarrow$ FindEpressions(base, low, k)
        \State $R \leftarrow$ FindEpressions(base, k+1, high)
        
        \State $\triangleright$ All ways to combine the left and right expressions
        \ForAll{$x \in LS$}
            \ForAll{$y \in RS$}
                \State $T \leftarrow T \cup (x+y, $`$($'$\ccat$ $L_x$ $\ccat $`$+$'$ \ccat$ $R_y$ $\ccat$`$)$'$)$
                \State $T \leftarrow T \cup (x\mul y, $`$($'$\ccat$ $L_x$ $\ccat $`$\mul$'$ \ccat$ $R_y$ $\ccat$`$)$'$)$
                \State $T \leftarrow T \cup (x/y, $`$($'$\ccat$ $L_x$ $\ccat $`$/$' $\ccat$ $R_y$ $\ccat$`$)$'$)$\Comment{if $y \neq 0$}
                \State $T \leftarrow T \cup (x^y, $`$($'$\ccat L_x \ccat $`\^\ '$ \ccat R_y \ccat$`$)$'$)$
            \EndFor
    
        \EndFor
        
    \EndFor
    \Return $T$
    \EndIf
 \EndFunction
 \\
 
 $F = $FindEpressions(10, 1, 9)
\end{algorithmic}
\caption{A recursive algorithm looking at the possible combinations using dynamic programming that builds a dictionary or lookup table of all expressible numbers.}
\label{alg:finddp}
\end{algorithm}

\para{Finding Possible Parentheses}
The possible ways parentheses can be nested for $n$ items is a classic problem in Computer Science with the proof published by Guy and Selfridge in 1973 \cite{Guy:1973:AMM}. 
An example of a Python algorithm to generate these is here \cite[btilly]{web:para}.



\para{Finding Negations}
Given all possible nested parentheses, for each we need to find all possible negations of the numbers and the individual expressions. With negation instead of subtraction, the following are all different: $(((-1+\dots$,  $-(((1+\dots$, $(-((1+\dots$, and $((-(1\dots$. 

\para{Coding with Bases}
Another small implementation detail is the need to deal with switching between multiple bases, which python has a method within casting to do so 234 in base 7 would be float(int(237,7)). 

\section{Too Small Bases} \label{sec:toosmall}

This is a quick overview of bases that are really too small to offer the necessary flexibility to count very high. Namely, 2, 3, and 4. Five could be in this category, but there is a massive jump between 4 and 5, so we will put it with the larger bases.

\para{Base 2}
For base 2, since we do not use $0$, only operations on the single digit $1$ can be performed, meaning $1$ and $-1$ are the only numbers expressible in a sequential representation. Thus, we can ignore it.

\para{Base 3}
In base 3, we now have 2 digits at our disposal, which allows our operations to have valid operands, however, there are not many combinations and many operations lead to the same answer. Table \ref{tab:b3} lists these values.

\begin{table}[H]
    \centering
    \tiny
    \renewcommand{\arraystretch}{1.4}
    \begin{tabular}{L L} 
        \textbf{\textrm{Increasing}} & \textbf{\textrm{Decreasing}}\\
        0_{10} = 0_{3}=\np~ & 0_{10}= 0_{3}= \np~\\
        1_{10} = 1_{3}=1^2 & 1_{10}= 1_{3}= 2-1\\
        2_{10}= 2_{3}= -1+2 \mbox{ or } 1\mul 2 & 2_{10}= 2_{3}= 2\mul 1 \mbox{ or } 2^1\\
        3_{10}= 10_{3}= 1+2 & 3_{10}= 10_{3}= 2+1\\
        5_{10}= 12_{3}= 12 & 7_{10}= 21_{3}=21\\
    \end{tabular}
    \caption{List of most of the possible base 3 numbers in increasing and decreasing sequential order.}
    \label{tab:b3}
\end{table}


\para{Base 4}
Base 4 is the smallest base where anything interesting happens and we can list a significant portion of integers with the largest number being $19683_{10}$ since in base 4 it is $3^{21}_{4}$. Table \ref{tab:b4} lists the first 20 values and then a few of interest.
    
\begin{table}[H]
    \centering
    \tiny
    \renewcommand{\arraystretch}{1.4}
    \begin{multicols}{2}
    \begin{tabular}{L L} 
        \textbf{\textrm{Increasing}} & \textbf{\textrm{Decreasing}}\\ 
        0_{10}=0_{4}= 1+2-3 & 0_{10}=0_{4}=3-2-1 \\
        1_{10}=1_{4}= 1^{2+3} & 1_{10}=1_{4}= 3-2\mul 1\\
        2_{10}=2_{4}= 1-2+3 & 2_{10}=2_{4}= 3-2+1\\
        3_{10}=3_{4}= 12-3 & 3_{10}=3_{4}= 3\mul (2-1)\\
        4_{10}=10_{4}= 1^2 +3& 4_{10}=10_{4}= 3+2-1\\
        5_{10}=11_{4}= -1+2\mul 3 & 5_{10}=11_{4}= 3+2\mul 1\\
        6_{10}=12_{4}= 1+2+3& 6_{10}=12_{4}= 3+2+1 \\
        7_{10}=13_{4}= 1+2\mul 3& 7_{10}=13_{4}= 3\mul 2+1 \\
        8_{10}=20_{4}= (1\mul 2)^3& 8_{10}=20_{4}= 3^2-1 \\
        9_{10}=21_{4}= 12+3& 9_{10}=21_{4}= 3\mul (2+1) \\
        
    \end{tabular}
    
    \begin{tabular}{L L} 
        \textbf{\textrm{Increasing}} & \textbf{\textrm{Decreasing}}\\
        10_{10}=22_{4}= -1+23& 10_{10}=22_{4}= 3^2+1\\
        11_{10}=23_{4}= 1\mul 23& 11_{10}=23_{4}=\np~ \\
        12_{10}=30_{4}= 1+23& 12_{10}=30_{4}=3+21 \\
        13_{10}=31_{4}= \np~& 13_{10}=31_{4}=32-1 \\
        14_{10}=32_{4}= \np~& 14_{10}=32_{4}=32\mul 1 \\
        15_{10}=33_{4}= \color{red}{(1+2)\ccat 3}& 15_{10}=33_{4}=32+1 \\
        16_{10}=100_{4}= \np~& 16_{10}=100_{4}=\np~ \\
        17_{10}=101_{4}= \np~& 17_{10}=101_{4}= \np~\\
        18_{10}=102_{4}= 12\mul 3& 18_{10}=102_{4}= \np~\\
        27_{10}=123_{4}= 123& 27_{10}=123_{4}= 3\mul 21\\
        57_{10}=321_{4}= \np~& 57_{10}=321_{4}= 321\\
    \end{tabular}
    \end{multicols}
    \caption{List of most of the possible base 4 numbers in increasing and decreasing sequential order.}    \label{tab:b4}
\end{table}


\section{Overview of 5-9}\label{sec:small}
For organizational reasons, we overview things of interest about bases 5 through 9, and the actual listings of the expressions are omitted for space with only the first 40 numbers shown for $5-7$ and the first 20 shown for 8 and 9. 

All possible results were generated (negatives, decimals, etc.), and everything could be listed rather than giving just the organized list as presented. However, the sheer number of results makes it infeasible to do so. For instance, with base eight, there are over 45,000 integer results, with most of them not being consecutive. 


\begin{table}[H]
    \centering
    \tiny
    \renewcommand{\arraystretch}{1.4}
    \setlength{\tabcolsep}{4pt}
    
\begin{multicols}{2}
\begin{tabular}{L  L  L L}
    \textbf{\normalsize $B_{10}$} & \textbf{\normalsize $B_5$}&\textbf{\normalsize Increasing}&\textbf{\normalsize Decreasing}\\
    
    0_{10}& 0_{5} & 1^2 + 3 -4& (4-3)-(2-1)\\
    1_{10}& 1_{5} & 1^{234}  &(4-3)\mul (2-1) \\
    2_{10}& 2_{5} & 1+2+3-4 & -4+3+2+1\\
    3_{10}& 3_{5} & 1+2\mul 3-4 &  4-3+2\mul 1 \\
    4_{10}& 4_{5} & 1+2-3+4 &  4+(3-2-1) \\
    5_{10}& 10_{5} & 1+2^3-4 & 4+3-2\mul 1 \\
    6_{10}& 11_{5} & 1-2+3+4 & 4+(3-2+1)\\
    7_{10}& 12_{5} & 1^2\mul 3+4 &  4+3\mul (2-1) \\
    8_{10}& 13_{5} & -1+2+3+4 &  4+3+2-1 \\
    9_{10}& 14_{5} & -1-2+(3\mul 4) &  4+3+2\mul 1\\ 
    10_{10}& 20_{5} & 1+2+3+4 &  4+3+2+1\\
    11_{10}& 21_{5} & 1+2\mul 3+4 & 4+3\mul 2+1\\
    12_{10}& 22_{5} & -12+34 &  4\mul 3\mul (2-1)\\
    13_{10}& 23_{5} & -1+2+3\mul 4 &  4+3\mul (2+1)\\
    14_{10}& 24_{5} & 1\mul 2+3\mul 4 &  4\mul 3+2\mul 1\\
    15_{10}& 30_{5} & 1+2+3\mul 4 &  4\mul 3+2\mul 1\\ 
    16_{10}& 31_{5} & -1+23+4  &  4\mul (3+2-1)\\
    17_{10}& 32_{5} & 1\mul 23+4  &  \np~\\
    18_{10}& 33_{5} & 1+23+4  &  4+3+21\\ 
    19_{10}& 34_{5} & -1+(2+3)\mul 4 &  4\mul (3+2)-1\\
    
\end{tabular}
    
\begin{tabular}{L  L  L L}
    \textbf{\normalsize $B_{10}$} & \textbf{\normalsize $B_5$}&\textbf{\normalsize Increasing}&\textbf{\normalsize Decreasing}\\
    20_{10}& 40_{5} & 1\mul (2+3)\mul 4 &  4\mul (3+2)\mul 1 \\
    21_{10}& 41_{5} & (1+2)\mul (3+4) & 4+32\mul 1\\
    22_{10}& 42_{5} & 1+2+34  & 4+32+1\\
    23_{10}& 43_{5} & -1+2\mul 3\mul 4 &  4\mul 3+21\\
    24_{10}& 44_{5} & 1\mul 2\mul 3\mul 4 & 4\mul 3\mul 2\mul 1\\
    
    25_{10}& 100_{5} & 1+2\mul 3\mul 4 &  4\mul 3\mul 2+1\\ 
    26_{10}& 101_{5} & 12+34 & 43+2+1 \\ 
    27_{10}& 102_{5} & \np~ &  \np~\\
    28_{10}& 103_{5} & (1+2\mul 3)\mul 4 &  4\mul (3\mul 2+1)\\
    29_{10}& 104_{5} & \color{red}{(1\mul 2 + 3)\ccat 4} &  -4+3\mul 21\\
    30_{10}& 110_{5} & \color{red}{1\ccat (-2+ 3 + 4)} &  \color{red}{(5-4)\ccat (3\mul 2 - 1)}\\
    31_{10}& 111_{5} & -1+2^3 \mul 4  & 4+3^{2+1}\\
    32_{10}& 112_{5} & 1\mul 2^3\mul 4 &  4\mul (3^2-1)\\
    33_{10}& 113_{5} & 1+2^3\mul 4 &  (4^3)/2+1 \\
    34_{10}& 114_{5} & 123-4 &  43+21\\
    35_{10}& 120_{5} & \color{red}{1\ccat(-2+ 3 \mul 4)} &  4\mul 3^2-1\\
    36_{10}& 121_{5} & (1+2)\mul 3\mul 4 &  4\mul 3\mul (2+1)\\
    37_{10}& 122_{5} & -1+2\mul 34  &  4+3\mul 21\\
    38_{10}& 123_{5} & 1\mul 2\mul 34 &  \np~\\
    39_{10}& 124_{5} & 1+2\mul 34  &  \np~\\
\end{tabular}
\end{multicols}
\vspace*{.2cm}
\caption{List of base 5 numbers from 0 to 39 in increasing and decreasing sequential order.}
\label{tab:b5}
\end{table}


\para{Base 5} 
There are four numbers in the representation for base five, and thus there is enough variability to begin making a meaningful amount of different combinations and possible integers. Still, this may be considered a relatively small base since the first impossible integer is 27. We also filled in some of the gaps with explicit concatenation.  
Table \ref{tab:b5} shows a list of the positive integers $0-39$ with their representations. The missing ones are not possible.

\para{Base 6}
Each increase in base exponentially increases the number of possibilities and the first positive integers that can not be expressed are 67 (increasing) and 77 (decreasing), and 97 is the first one not representable by either.
Table \ref{tab:b6} shows a list of the positive integers $0-39$ with their representations.

\begin{table}[h]
    \centering
    \tiny
    \renewcommand{\arraystretch}{1.4}
    \setlength{\tabcolsep}{3.5pt}
    \setlength\columnsep{30pt}
\begin{multicols}{2}
\begin{tabular}{L  L  L L}
    \textbf{\normalsize $B_{10}$} & \textbf{\normalsize $B_6$}&\textbf{\normalsize Increasing}&\textbf{\normalsize Decreasing}\\
       0_{10}& 0_{6} & 1^{23}+4-5 & 5-4-3+2\mul 1\\
    1_{10}& 1_{6} & 1^{2345}  & 5-4^{3-2-1}\\
    2_{10}& 2_{6} & 12+3-4-5 & 5+4-3\mul 2-1\\
    3_{10}& 3_{6} & 1-2+3-4+5 & 5+4-3-2-1  \\
    4_{10}& 4_{6} & 12-3+4-5  & 5+4-3-2\mul 1\\
    5_{10}& 5_{6} & 1+2+3+4-5 & 5+4-3-2+1\\
    6_{10}& 10_{6} & 12-3-4+5 & 5+4-3\mul (2-1)  \\
    7_{10}& 11_{6} & 1+2+3-4+5  & 5+4-3+2-1\\
    8_{10}& 12_{6} & 1+2\mul 3-4+5 & 5+4-3+2\mul 1\\
    9_{10}& 13_{6} & 1+2-3+4+5 &  5+4+3-2-1 \\
    10_{10}& 14_{6} & 12+3+4-5  & 5+4+3-2\mul 1\\
    11_{10}& 15_{6} & 12-3\mul (4-5)  & 5+4+3-2+1\\
    12_{10}& 20_{6} & 12+3-4+5 &   5+4+3\mul (2-1)\\
    13_{10}& 21_{6} & 1-2\mul (3-4-5) & 5+4+3+2-1  \\
    14_{10}& 22_{6} & 12-3+4+5  & 54-32\mul 1\\
    15_{10}& 23_{6} & 1+2+3+4+5 & 5+4+3+2+1\\
    16_{10}& 24_{6} & 1+2\mul 3+4+5 &  5+4+3\mul 2+1 \\
    17_{10}& 25_{6} & 1+23-4+5  & 5+4+3^2-1\\
    18_{10}& 30_{6} & 1+2^3+4+5 & 5+4+3\mul (2+1)\\
    19_{10}& 31_{6} & 1\mul 2+34-5 &  5+4+3^2+1 \\
    
\end{tabular}
    
\begin{tabular}{L  L  L L}
    \textbf{\normalsize $B_{10}$} & \textbf{\normalsize $B_6$}&\textbf{\normalsize Increasing}&\textbf{\normalsize Decreasing}\\
    20_{10}& 32_{6} & 12+3+4+5 & 5+4\mul 3+2+1\\
    21_{10}& 33_{6} &  1^{23}+4\mul 5 & 5+4\mul (3+2-1)\\
    22_{10}& 34_{6} & 123-45 & 5-4+32+1\\
    23_{10}& 35_{6} & 12\mul 3+4-5 &  5\mul 4+3\mul (2-1) \\
    24_{10}& 40_{6} &  (12/3)\mul (4+5) & 54+3-21\\
    
    25_{10}& 41_{6} & 12+34-5 & 54-3\mul (2+1)\\
    26_{10}& 42_{6} & 1+2+3+4\mul 5 & 5\mul 4+3+2+1  \\
    27_{10}& 43_{6} & 1+2\mul 3+4\mul 5  & 5\mul 4+3\mul 2+1\\
    28_{10}& 44_{6} & 1\mul 2-3+45 & 54-3-2-1\\
    29_{10}& 45_{6} & 1+2-3+45 &  54-3-2\mul 1 \\
    30_{10}& 50_{6} & 12\mul (3/4)\mul 5 &54-3-2+1  \\
    31_{10}& 51_{6} & 123-4\mul 5 & 54-3\mul (2-1)\\
    32_{10}& 52_{6} & 12\mul (3-4+5) & 54-3+2-1\\
    33_{10}& 53_{6} & 12\mul 3+4+5  &  54-3+2\mul 1 \\
    34_{10}& 54_{6} & 12-3+45  & 54+3-2-1\\
    35_{10}& 55_{6} & 12+34+5 & 54+3-2\mul 1\\
    36_{10}& 100_{6} & 1+23+4\mul 5 & 54+3-2+1  \\
    37_{10}& 101_{6} & 1\mul 2+(3+4)\mul 5  & 54+3\mul (2-1)\\
    38_{10}& 102_{6} & 1+2+(3+4)\mul 5 & 54+3+2-1\\
    39_{10}& 103_{6} & 1\mul 2\mul 34-5 &  54+3+2\mul 1 \\
\end{tabular}

\end{multicols}
\vspace*{.2cm}
\caption{List of base 6 numbers from 0 to 39 in increasing and decreasing sequential order.}
\label{tab:b6}
\end{table}


\para{Base 7} 
Starting with base 7, the amount of numbers possible explodes, and thus, we will simply list the numbers without trying to fit them onto a single page. In fact, every number is expressible until 260. Curiously the first inexpressible decreasing integer is 262.
Table \ref{tab:b7} shows a list of the positive integers $0-39$ with their representations.

\begin{table}[H]
    \centering
    \tiny
    \renewcommand{\arraystretch}{1.4}
    \setlength{\tabcolsep}{3.1pt}
    \setlength\columnsep{30pt}
\begin{multicols}{2}
\begin{tabular}{L  L  L L} 
    \textbf{\normalsize $B_{10}$} & \textbf{\normalsize $B_7$}&\textbf{\normalsize Increasing}&\textbf{\normalsize Decreasing}\\
       0_{10}& 0_{7} & 1^{234}+5-6 & 6+5-4-3\mul 2 -1\\
    1_{10}    &  1_{7}     &  12-3-4+5-6           &  654^{3-2-1}   \\
    2_{10}    &  2_{7}     &  12-3+4\mul(5-6)         &  6+5-4-3-2\mul 1         \\
    3_{10}    &  3_{7}     &  12-3-4-5+6           &  6+5+4+3-21          \\
    4_{10}    &  4_{7}     &  12+34-5\mul 6            &  6+54/3-21           \\
    5_{10}    &  5_{7}     &  12+3+4-5-6           &  6+5+4-3^2-1        \\
    6_{10}    &  6_{7}     &  12+3+(4-5)\mul 6         &  65+4-3\mul 21           \\
    7_{10}    &  10_{7}    &  12+3-4+5-6           &  65-4\mul(3^2+1)       \\
    8_{10}    &  11_{7}    &  123/45+6             &  6+5+4-3\mul 2-1         \\
    9_{10}    &  12_{7}    &  12+3-4-5+6           &  6+5+4-3-2-1         \\
    10_{10}   &  13_{7}    &  12+3\mul 4-5-6           &  65-4\mul 3^2-1         \\
    11_{10}   &  14_{7}    &  12+3/(4+5)\mul 6         &  65-4\mul 3\mul(2+1)        \\
    12_{10}   &  15_{7}    &  123/45\mul 6             &  65-4\mul 3^2+1         \\
    13_{10}   &  16_{7}    &  12+3+(4-5)^6        &  65-43-2-1           \\
    14_{10}   &  20_{7}    &  12-34+5\mul 6            &  65-43-2\mul 1           \\
    15_{10}   &  21_{7}    &  12+3+4+5-6           &  65-43-2+1           \\
    16_{10}   &  22_{7}    &  12+3-4\mul(5-6)         &  65-43\mul(2-1)         \\
    17_{10}   &  23_{7}    &  12+3+4-5+6           &  65-43+2-1           \\
    18_{10}   &  24_{7}    &  12+3\mul(4+5-6)         &  65-43+2\mul 1           \\
    19_{10}   &  25_{7}    &  12+3-4+5+6           &  65-43+2+1           \\
    
\end{tabular}
\hspace*{-.5cm} 
\begin{tabular}{L  L  L L}
    \textbf{\normalsize $B_{10}$} & \textbf{\normalsize $B_7$}&\textbf{\normalsize Increasing}&\textbf{\normalsize Decreasing}\\
    20_{10}   &  26_{7}    &  12+34/5+6            &  65-4-32\mul 1           \\
    21_{10}   &  30_{7}    &  123-4-56             &  65-4-32+1           \\
    22_{10}   &  31_{7}    &  123-4\mul(5+6)          &  65-4\mul 3\mul 2-1          \\
    23_{10}   &  32_{7}    &  12+34-5-6            &  65-4\mul 3\mul 2\mul 1          \\
    24_{10}   &  33_{7}    &  123\mul 4/(5+6)          &  65+4-3^{2+1}       \\
    
    25_{10}   &  34_{7}    &  12-34+56             &  65-4-3-21           \\
    26_{10}   &  35_{7}    &  12+3+4\mul 5-6           &  65\mul 4-321            \\
    27_{10}   &  36_{7}    &  123-45-6             &  65+4-32-1           \\
    28_{10}   &  40_{7}    &  12\mul 3+(4-5)^6        &  65+4-32\mul 1           \\
    29_{10}   &  41_{7}    &  123+4-56             &  65+4-32+1           \\
    30_{10}   &  42_{7}    &  12-3\mul (4-5-6)         &  6+5-4+32\mul 1          \\
    31_{10}   &  43_{7}    &  12\mul 3-4\mul (5-6)         &  65-43+21            \\
    32_{10}   &  44_{7}    &  123-4-5\mul 6            &  65-4\mul 3-2-1          \\
    33_{10}   &  45_{7}    &  12+34+5-6            &  65+4-3-21           \\
    34_{10}   &  46_{7}    &  12-34\mul (5-6)          &  65-4-3\mul(2+1)        \\
    35_{10}   &  50_{7}    &  12+34-5+6            &  65-4-3^2+1         \\
    36_{10}   &  51_{7}    &  12+3^{4+5-6}      &  654/3^2-1          \\
    37_{10}   &  52_{7}    &  12/3+4+5\mul 6           &  654/3/(2+1)         \\
    38_{10}   &  53_{7}    &  12+3+4\mul 5+6           &  654/3^2+1          \\
    39_{10}   &  54_{7}    &  123-45+6             &  65+4+3-21           \\
\end{tabular}

\end{multicols}
\vspace*{.2cm}
\caption{List of base 7 numbers from 0 to 39 in increasing and decreasing sequential order.}
\label{tab:b7}
\end{table}

\vspace*{-.5cm}

\para{Base 8} 
Table \ref{tab:b8} shows a list of the positive integers $0-19$ with their representations. Due to the length of the expressions, there is not room for more numbers. Base eight does not have an inexpressible number until 614 for an increasing sequence, and 809 for a decreasing sequence. The first positive integer that can not be expressed by either is 1192.

\begin{table}[H]
    \centering
    \tiny
    \renewcommand{\arraystretch}{1.4}
    \setlength{\tabcolsep}{4pt}
    \setlength\columnsep{30pt}
\begin{multicols}{2}
\hspace*{-.5cm}
\begin{tabular}{L  L  L L}
    \textbf{\normalsize $B_{10}$} & \textbf{\normalsize $B_8$}&\textbf{\normalsize Increasing}&\textbf{\normalsize Decreasing}\\
          0_{10}& 0_{8} & 1^{2345}+6-7 & 7$+$6$-$5$-$4$-$3$-$2$+$1\\
        1_{10}    &  1_{8}     &  1^{234567}          &  7654^{3-2-1}          \\
    2_{10}    &  2_{8}     &  12+3-4+(5-6)\mul 7          &  76+5-4^3-2+1          \\
    3_{10}    &  3_{8}     &  12+3-4-5+6-7            &  76+5-4^3\mul (2-1)        \\
    4_{10}    &  4_{8}     &  123-45-6\mul 7              &  76+5-4^3+2-1          \\
    5_{10}    &  5_{8}     &  12+3-4-5-6+7            &  76+5-4^3+2\mul 1          \\
    6_{10}    &  6_{8}     &  12+34\mul (5-6)/7           &  76+5-4^3+2+1          \\
    7_{10}    &  7_{8}     &  123/4^56+7             &  76-5\mul (4$+$3\mul 2$+$1)         \\
    8_{10}    &  10_{8}    &  123-4\mul 5-67              &  76-54-3^2-1           \\
    9_{10}    &  11_{8}    &  12+3+4+5-6-7            &  76-54-3\mul (2+1)          \\
    10_{10}   &  12_{8}    &  123\mul 4-56\mul 7              &  76-54-3^2+1           \\
    11_{10}   &  13_{8}    &  123-4\mul (5+6+7)           &  76+5+4\mul (3-21)          \\
    12_{10}   &  14_{8}    &  12+3+4+5\mul (6-7)          &  76+5-4-3\mul 21            \\
    13_{10}   &  15_{8}    &  12+3+4-5-6+7            &  76-54-3-2\mul 1            \\
    14_{10}   &  16_{8}    &  123-4-5\mul (6+7)           &  76-54-3-2+1            \\
    15_{10}   &  17_{8}    &  12+34-5\mul 6+7             &  76+5-43-21             \\
    16_{10}   &  20_{8}    &  12+3+4+(5-6)^7         &  76-54-3+2-1            \\
    17_{10}   &  21_{8}    &  12+34/(5+6-7)           &  76-54-3+2\mul 1            \\
    18_{10}   &  22_{8}    &  12+3-45+6\mul 7             &  76-54+3-2-1            \\
    19_{10}   &  23_{8}    &  123-4-5-67              &  76-54+3-2\mul 1            \\
   
 \end{tabular}
 
 \hspace*{-.5cm}   
 \begin{tabular}{L  L  L L}
     \textbf{\normalsize $B_{10}$} & \textbf{\normalsize $B_8$}&\textbf{\normalsize Increasing}&\textbf{\normalsize Decreasing}\\
        
     20_{10}   &  24_{8}    &  12+34-5-6-7             &  76+5+4-3\mul 21            \\
    21_{10}   &  25_{8}    &  123-4\mul 5-6\mul 7             &  76-54+3\mul (2-1)          \\
    22_{10}   &  26_{8}    &  123+4-5\mul (6+7)           &  76-54+3+2-1            \\
    23_{10}   &  27_{8}    &  12+3+4+5-6+7            &  76-54+3+2\mul 1            \\
    24_{10}   &  30_{8}    &  12+3-4\mul (5-6)+7          &  76-54+3+2+1            \\
        25_{10}   &  31_{8}    &  12+3+4-5+6+7            &  76-54+3\mul 2+1            \\
    26_{10}   &  32_{8}    &  123-4-56-7              &  76-54+3^2-1           \\
    27_{10}   &  33_{8}    &  123+4-5-67              &  76+5-4\mul (3^2+1)        \\
    28_{10}   &  34_{8}    &  123+(4-5)\mul 67            &  76-54+3^2+1           \\
    29_{10}   &  35_{8}    &  123-4+5-67              &  76+5-43-2-1            \\
    30_{10}   &  36_{8}    &  12+34+5-6-7             &  76+5-43-2\mul 1            \\
    31_{10}   &  37_{8}    &  12+34+(5-6)\mul 7           &  76+5-43-2+1            \\
    32_{10}   &  40_{8}    &  123-4-5-6\mul 7             &  76+5-43\mul (2-1)          \\
    33_{10}   &  41_{8}    &  123-45-6-7              &  76+5-43+2-1            \\
    34_{10}   &  42_{8}    &  123+4-56-7              &  76+5-43+2\mul 1            \\
    35_{10}   &  43_{8}    &  12+3+4+5+6+7            &  76+5-43+2+1            \\
    36_{10}   &  44_{8}    &  12+3\mul (4+5)+6-7          &  76+5-4-32-1            \\
    37_{10}   &  45_{8}    &  123+4+5-67              &  76+5-4-32\mul 1            \\
    38_{10}   &  46_{8}    &  12+3\mul (4+5)-(6-7)        &  76+5-4-32+1            \\
    39_{10}   &  47_{8}    &  123-4/5\mul 67              &  76+5-4\mul (3\mul 2+1)         \\
\end{tabular}
\end{multicols}
\caption{List of base 8 numbers from 0 to 47 in increasing and decreasing sequential order.}
\label{tab:b8}
\end{table}


\vspace*{.2cm}
\para{Base 9} 
Similar to base eight, only representations for numbers $0-19$ are shown in Table \ref{tab:b9}. The first unrepresentable positive integers for increasing and decreasing sequential representations are 3293 and 4570, respectively. The integer 5414 is the smallest positive integer unrepresentable by either.


\begin{table}[H]
    \centering
    \tiny
    \renewcommand{\arraystretch}{1.4}
    \setlength{\tabcolsep}{5pt}
    \setlength\columnsep{50pt}
\begin{multicols}{2}
\hspace*{-.7cm} 
\begin{tabular}{L  L  L L}
    \textbf{\normalsize $B_{10}$} & \textbf{\normalsize $B_9$}&\textbf{\normalsize Increasing}&\textbf{\normalsize Decreasing}\\
    0_{10}         &  0_{9}          &  1^{23456}$+$ 7$-$8        &  8$-$7$-$6$+$5$+$4$-$3$-$2$+$1          \\ 
    1_{10}         &  1_{9}          &  1^{2345678}      &  8$+$7$-$6$-$5$-$4$+$3$-$2\mul 1         \\ 
    2_{10}         &  2_{9}          &  123$-$4\mul (5$+$6)$+$7\mul 8    &  876$-$ 5\mul ((4\mul 3)^2$-$1)   \\ 
    3_{10}         &  3_{9}          &  (123$+$45)/(6$+$7)$-$8       &  876/(5^{43})$+$2$+$1      \\ 
    4_{10}         &  4_{9}          &  (123$-$4$+$5$+$67)/8       &  (87$+$6$-$54)/(3\mul (2$+$1))    \\ 
    5_{10}         &  5_{9}          &  (123$+$4$-$5$+$67)/8       &  (876$-$5^4$+$3)/21      \\ 
    6_{10}         &  6_{9}          &  (123$+$4$-$56$+$7)/8       &  (87$+$65)/(4\mul 3\mul 2$-$1)    \\ 
    7_{10}         &  7_{9}          &  123$+$4$+$5$-$((6$+$7)\mul 8)    &  876$-$(5$+$4)^3$+$21      \\ 
    8_{10}         &  8_{9}          &  1234/(5^{67})$+$8         &  (87$+$6$+$5)/(4$+$3\mul 2)$-$1  \\ 
    9_{10}         &  10_{9}         &  (123$-$45)/67$+$8          &  87$+$65$-$4^3\mul 2 $+$1   \\ 
    10_{10}        &  11_{9}         &  (123$+$(4\mul 5$-$6\mul 7))/8    &  87$+$65$-$4^3\mul 2\mul 1   \\ 
    11_{10}        &  12_{9}         &  (123$+$4$-$5$+$6$+$7)/8 & 87$+$65$-$4^3\mul 2$+$1   \\ 
    12_{10}        &  13_{9}         &  123$+$(4$-$5)\mul 6\mul (7$+$8)    &  87$+$6$+$5$-$43\mul 2\mul 1    \\ 
    13_{10}        &  14_{9}         &  (1234$-$5$+$6)/78          &  87$+$6$+$5$-$43\mul 2$+$1    \\ 
    14_{10}        &  15_{9}         &  (123$+$4$+$5$-$6$+$7)/8    &  (87$+$6$+$5$-$4^3)/2$+$1  \\ 
    15_{10}        &  16_{9}         &  123\mul (4/5$+$6)^{7-8}   &  (87$+$6$+$5\mul 4)/(3\mul 2$+$1)  \\ 
    16_{10}        &  17_{9}         &  ((123$+$4\mul 5)/67)\mul 8       &  87$+$6$+$(5$-$43)\mul 2$-$1    \\ 
    17_{10}        &  18_{9}         &  (1234$+$5\mul 6)/(7\mul 8)       &  87$+$6$+$(5$-$43)\mul (2\mul 1)    \\ 
    18_{10}        &  20_{9}         &  (123$+$45$-$6$+$7)/8       &  87$+$65$-$(4\mul (32$+$1))       \\ 
    19_{10}        &  21_{9}         &  ((123$+$45)/(6$+$7))$+$8       &  (876$+$5)/((43$-$2)$+$1)       \\ 
 \end{tabular}

 \hspace*{-.7cm} 
\begin{tabular}{L  L  L L}
     \textbf{\normalsize $B_{10}$} & \textbf{\normalsize $B_9$}&\textbf{\normalsize Increasing}&\textbf{\normalsize Decreasing}\\
     20_{10}        &  22_{9}         &  123$+$4$-$(5\mul 6)$+$(7\mul 8)    &  87$+$65\mul (4$-$3$+$2\mul 1)    \\ 
    21_{10}        &  23_{9}         &  123$+$4$+$5$-$(6\mul (7$+$8))   &  87$+$(65$-$(43\mul (2$+$1)))       \\ 
    22_{10}        &  24_{9}         &  123$+$(4\mul (56$-$78))          &  87$+$(65$-$(4\mul (32\mul 1)))       \\ 
    23_{10}        &  25_{9}         &  (123$+$4$+$5$-$6)/7$+$8    &  (876$-$(5^4))/(3$+$(2$-$1))   \\ 
    24_{10}        &  26_{9}         &  123$+$(4$-$(5$+$(6$+$78)))       &  ((87$+$65)/(4\mul 3))\mul 2$+$1    \\ 
    25_{10}        &  27_{9}         &  123$+$((4$-$5)\mul (6$+$78))       &  87$+$(6$+$((5$-$((4^3)$+$2))$+$1))  \\ 
    26_{10}        &  28_{9}         &  (123$-$4)$+$(5$-$(6$+$78))       &  87$+$(65$-$(4\mul (32$-$1)))       \\ 
    27_{10}        &  30_{9}         &  123$+$((4\mul (5$-$6))$-$78)       &  87$+$(6$+$((5$-$(4^3))$+$(2$-$1)))  \\ 
    28_{10}        &  31_{9}         &  (123$-$(4$+$5))$+$(6$-$78)       &  (87$+$((6$+$(5$+$4))/3))/(2$+$1)  \\ 
    29_{10}        &  32_{9}         &  (123$-$(4$+$((5$+$6)\mul 7)))$+$8    &  87$+$(6$+$(5$-$(4$+$(3\mul 21))))    \\ 
    30_{10}        &  33_{9}         &  123$+$((4$+$5)\mul ((6$-$7)\mul 8))    &  (876$-$(54\mul 3))/21          \\ 
    31_{10}        &  34_{9}         &  123$-$(4$+$(5$+$(6$+$(7\mul 8))))    &  ((87$+$(6$+$5))/((4$-$3)$+$2))$+$1  \\ 
    32_{10}        &  35_{9}         &  123$+$(4$-$(5$+$(67$+$8)))       &  ((87$+$65)/4)$-$((3/2)$+$1)    \\ 
    33_{10}        &  36_{9}         &  123$+$((4$-$5)\mul (67$+$8))       &  (876$+$(5$+$4))/(3$+$21)       \\ 
    34_{10}        &  37_{9}         &  123$+$(4$+$(5$-$(6$+$78)))       &  ((87$+$(65\mul 4))/(3^2))$-$1   \\ 
    35_{10}        &  38_{9}         &  123$+$(4$+$((5$-$6)\mul 78))       &  (876$-$(54$+$3))/21          \\ 
    36_{10}        &  40_{9}         &  123$+$((4$-$5)$+$(6$-$78))       &  ((87$+$(65$-$(4^3)))/2)$-$1   \\ 
    37_{10}        &  41_{9}         &  123$+$((4$-$((5$+$6)\mul 7))$+$8)    &  (87$+$(65$-$(4^3)))/(2\mul 1)   \\ 
    38_{10}        &  42_{9}         &  123$+$(4\mul (5$-$(6$+$(7$+$8))))    &  (876$+$5)\mul ((4$-$3)/21)       \\ 
    39_{10}        &  43_{9}         &  123$+$(45$-$((6$+$7)\mul 8))       &  (87$+$(6$+$(5$-$(4\mul 3))))/(2\mul 1)  \\ 
\end{tabular}
\vspace*{.2cm}
\end{multicols}
\caption{List of base 9 numbers from 0 to 19 in increasing and decreasing sequential order.}
\label{tab:b9}
\vspace*{-.5cm}
\end{table}

\section{Base 10} \label{sec:b10}
Taneja showed that crazy increasing sequential representations for base 10 numbers was possible for all numbers to 11111 with one exception \cite{Taneja:2013:1302.1479v5}. There is no known solution to 10958 with the numbers in increasing order. It is possible to get close, but not exact. We found two extremely close solutions. 
\begin{align}
10957.9775 &= -1+2^{(3^4)/5}/(6+7/8)+9 \\
10958.0021   &= (1+((2-(-3^{-4})^{5/(6\mul -7)})^{-8}))+9 
\end{align}
No closer solutions are possible. Running an exhaustive algorithm to look at all possible combinations yields no solution of 10958. Table \ref{tbl:10958} lists all values, and an expression yielding that value (there are many), that were found within the range $[10957.9, 10958.1]$. 

\begin{table}[t]
\centering \footnotesize
\renewcommand{\arraystretch}{1.3}
    \begin{tabular}{| L | L |}\hline 
        \textbf{Number} & \textbf{Expression} \\ \hline
        
        10957.90411   & -1+((2- 3\mul 4)^5)/(((6-7)/8)-9) \\ 
        10957.92857   &  -1/2\mul 3\mul (4^5\mul (6/7-8)+9)       \\ 
        10957.93277   & (-1/(2+3)+(4^{5-((6-7)/8)}))\mul 9 \\ 
        10957.97006   & -(1/2)+(34+((5^{-(6/7)+8})/9)) \\ 
        10957.97751   &  -1+2^{3^4/5}/(6+7/8)+9                   \\ 
        10958.00206   & ((1+((2-((-3^{-4})^{5/(6\mul -7)}))^{-8}))+9) \\ 
        10958.06611   & -1-2/3+ 4^{5-(6-7)/8}\mul 9 \\ 
        10958.09749   & (1+234)\mul (5\mul (6+((7/8)^{-9}))) \\ 
        \hline
    \end{tabular}              
\caption{List of base 10 numbers and the expressions that are close to 10958. This shows numbers within $.1$ of the desired number, and are rounded to five decimal digits due to the precision limitations in the calculations.}
\label{tbl:10958}
\end{table}

However, the original author uses concatenation without defining it as one of the allowable operators between operands as a step. Matt Parker found a solution if concatenation is allowed to occur as a step of the calculation, which is not done for any other number in the original paper. Let $\ccat$ be the concatenation operator. Thus, $234$ would be shown as $2\ccat 3\ccat 4$ in an equation. His solution is shown in \cite[Matt Parker]{numberphile1,numberphile2} and is:
\begin{equation}
10958 = 1 \mul 2\ccat 3 + ((4 \mul 5 \mul 6)\ccat 7  + 8) \mul 9.
\end{equation}
                                     
There are many other solutions if adding a new operator is allowed, such as factorials. Some examples are
\begin{align}
    10958 &= (1-2+3)\mul (456+7!-8-9),\text{\cite[Emmanuel Vantieghem]{primepuzzle}},\\
    10958 &=1+2+3!!+(-4+5!+6-7)\mul 89, \text{\cite[Inder J. Taneja]{primepuzzle}},     \\
    10958 &=1\mul 2\mul (3!!-4!\mul (5+6)+7!-8-9), \text{\cite[Inder J. Taneja]{primepuzzle}}, \mbox{ and}\\
    10958 &= -(1 + 2 - 3 + 4 - ((5! + 6) \mul  (78 + 9))), \text{\cite[Chris Smith]{primepuzzle}},
\end{align}

and it is possible if using the number $10$ as shown by Taneja, 
$10958=1*2^3+(4^5+6+7*8+9)*10$ \cite[Inder J. Taneja]{primepuzzle}.
Our approach settles this definitively through brute force search without the use of concatenation as a later step or another operation allowed beyond the initial ones. Thus, 10958 is the smallest integer for which this is not possible.

\begin{lemma} 
The integer 10958 is not expressible in base 10 increasing sequential representation (numbers $1-9$) with only the operations addition, multiplication, division, exponentiation, an initial concatenation of the numbers, arbitrary parentheses for operator precedence, and negation.
\end{lemma}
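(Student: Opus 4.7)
The plan is to upgrade the enumeration discussed in Sections 2 and 3 into a rigorous finite certificate that 10958 never arises. First, I would appeal to the combinatorial bound $C(10) = 2.489344 \times 10^{14}$ from Table 2 to observe that the set of admissible expressions on the ordered operand sequence $\langle 1, 2, \dots, 9 \rangle$ is finite: every expression decomposes into (i) a choice of which adjacent operands are fused by the initial concatenation into maximal digit-blocks, (ii) a binary parse tree on the resulting blocks, (iii) a labeling of each internal node by one of the four binary operators (addition, multiplication, division, or exponentiation), and (iv) a choice of negation flags on operands and on internal subtrees. This decomposition is exactly the one underlying Algorithm 1, so enumeration can be implemented by the dynamic-programming recursion that caches, for each contiguous subsequence $\langle i, \dots, j \rangle$, the complete set of values achievable from it.

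Second, I would execute this procedure on $(i,j) = (1,9)$, collect the resulting value set $V$, and verify that $10958 \notin V$. Memoization is what makes this feasible: rather than visit each of the $\approx 2.5 \times 10^{14}$ expressions independently, the DP stores only distinct value/expression pairs per subsequence, which (as noted earlier) fits in roughly 20 GB of memory and completes in under two hours. The restriction that concatenation appears only at the leaves and never as a mid-computation operator is enforced by step (i) above, which is precisely what excludes Parker's construction discussed in the surrounding text.

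The main obstacle, and the step requiring the most care, is numerical precision, since division and fractional exponentiation can produce irrationals and floating-point roundoff could in principle either mask an exact hit or manufacture a false one. I would address this in two complementary ways. For branches whose operations all remain inside $\mathbb{Q}$ (integer operands combined by addition, negation, multiplication, division, and integer-exponent powers), evaluation can be carried out in exact rational arithmetic so that equality with $10958$ is decided symbolically. For branches that pass through a non-integer exponent, the intermediate value is algebraic but generically irrational, and any candidate whose floating-point evaluation lands inside a neighborhood of $10958$ can be re-examined at higher precision. As a concluding audit, Table 5 records every value attained by any expression within $[10957.9, 10958.1]$, and each of the eight entries differs from $10958$ by more than $0.002$, leaving no room for rounding to disguise an exact solution. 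Combining the exhaustive enumeration with this precision check yields the lemma.
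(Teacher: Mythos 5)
Your proposal is correct and follows essentially the same route as the paper: the lemma is established by the exhaustive dynamic-programming enumeration of Algorithm \ref{alg:finddp} over all parse trees, operator labelings, initial concatenations, and negation patterns, with verification that $10958$ never appears in the resulting value set. Your added attention to exact rational arithmetic and higher-precision re-checking of near-misses is a sensible strengthening of the paper's floating-point treatment (whose precision limits are only acknowledged in the caption of Table \ref{tbl:10958}), but it does not change the underlying argument.
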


\begin{proof}
The proof is the program and its output of all combinations possible and their evaluation. The source code is available, and can be viewed (albeit in shortened form) in Appendix \ref{app:source}. 
\end{proof}


\section{Some More Fun}\label{sec:fun}
Here we look at several interesting open problems or additional ways to explore this concept.

\para{Fun Number Forms} Taneja gives a few in his paper for base 10, and we extend this with a few examples of numbers that are always expressible in a given base $b$.
\begin{itemize} \small \setlength\itemsep{.1em}
\item $0 = 1^{1\mul 2 \mul \dots \mul (b-3)}+(b-2)-(b-1)$ 
\item $0 = 1^{ 2  \dots (b-3)}+(b-2)-(b-1)$ 
\item $1 = 1^{1\mul 2 \mul \dots \mul (b-1)}$ 
\item $1 = 1^{12 \dots (b-1)}$ 
\item $b-1 = 1^{1\mul 2 \mul \dots \mul (b-2)} \mul (b-1)$
\item $b-1 = 1^{12\dots(b-2)}\mul (b-1)$ 
\item $b = 1^{1\mul 2 \mul \dots \mul (b-2)}+(b-1)$
\item $b = 1^{12\dots(b-2)}+(b-1)$                     
\end{itemize}
If $b$ is odd, then $0 = (b-1)-(b-2)-(b-3)+(b-4)-\dots+4-3-2+1$, and similar if $b$ is even.

\para{Taneja Primes}
Based on his work related to these numbers, we define a Taneja prime to be any prime expressible in crazy sequential representation for a base $b$.
Here, we investigate some interesting questions.
\begin{itemize} \small \setlength\itemsep{.1em}
    \item What is the smallest prime not expressible in a given base?
    \item What is the largest prime expressible in a given base?
    \item What is the sequence of primes not expressible (or expressible) in a given base?
    \item What is the characteristic function for the expressible or non-expressible primes for a base? 
    \item What is the sequence of integers (or primes) not expressible by either an increasing or decreasing representation.
    \item What is the smallest base a given prime (or integer) can be expressed in for increasing and decreasing? 
\end{itemize}

Table \ref{tab:overview} lists the first prime not expressible in a given base for increasing and decreasing representations as well as the first prime not expressible by either. For an increasing representation, Table \ref{tab:sminc} is the smallest base a prime can be sequentially represented in as well as an expression giving the value. 

\begin{table}[t]
    \centering
    \begin{tabular}{| c | c | c | c |}
        \hline
        \textbf{Base} & \textbf{Inc Prime} & \textbf{Dec prime} & \textbf{Both prime} \\ \hline
        3 & 7 & 5  & 11\\ \hline
        4 & 13& 11  & 17\\ \hline
        5 & 27& 17 & 27\\ \hline
        6 & 67& 83  & 97\\ \hline
        7 & 281 & 379 & 499 \\ \hline
        8 & 1153 & 809  & 1579 \\ \hline
        9 & 4597 & 5417  & 7027 \\ \hline
        10 & 15971 &  18493 & 25763\\ \hline        
    \end{tabular}
    \caption{A brief overview of the first primes not expressible in a given base for increasing and decreasing representations as well as the first prime not expressible by either.}
\label{tab:overview}
\end{table}

\begin{table}[t]
\centering
\scriptsize
\begin{multicols}{2}

\begin{tabular}{L c L} 
\renewcommand{\arraystretch}{1.1}
    \textbf{$B_{10}$} & \textbf{Base} & \textbf{Increasing} \\          
   2_{10}  & 3 & 1 \mul 2                                        \\
   3_{10}  & 3 & 1 + 2                                           \\
   5_{10}  & 3 & 12                                              \\
   7_{10}  & 4 & 1 + 2 \mul 3                                    \\
   11_{10} & 4 & 1 + 23                                          \\
   13_{10} & 5 & -1 + 2 + 3 \mul 4                               \\
   17_{10} & 5 & 1 + 2                                           \\
   19_{10} & 5 & -1 + (2 + 3) \mul 4 	                          \\
   23_{10} & 5 & -1 + 2 \times 3 \mul 4	                      \\
   29_{10} & 5 & \color{red}{(1\mul 2 + 3)\ccat 4}               \\
   31_{10} & 5 & -1+2^3 \mul 4	                                  \\
   37_{10} & 5 & -1+2\mul 34	                                  \\
   41_{10} & 6 & (1+2)\mul 3\mul 4+5	                          \\
   43_{10} & 6 & 12+(3+4)\mul 5	                              \\
   47_{10} & 6 & 1+2\mul (3+4\mul 5)	                          \\
   53_{10} & 5 & 1+23\mul 4	                                  \\
   59_{10} & 6 & 1-2+3\mul 4\mul 5	                              \\
   61_{10} & 6 & 12\mul (3+4)+5	                              \\
   67_{10} & 6 & \color{red}{1 \ccat(2+3)\ccat (-4 + 5 )}	      \\
   71_{10}  & 6 & 1+2\mul (3+4)\mul 5v                \\
   73_{10}  & 6 & 1+2^{3}\mul (4+5)	                 \\
\end{tabular}
   
\begin{tabular}{L c L} 
    \textbf{$B_{10}$} & \textbf{Base} & \textbf{Increasing} \\   
   79_{10}  & 5 & -1\mul 2+3^4 	                     \\
   83_{10}  & 5 & 1\mul 2+3^4	                     \\
   89_{10}  & 6 & 1+2+3^4+5	                         \\
   97_{10}  & 7 & 12\mul 3\mul 4-5-6  	             \\
   101_{10} & 6 & 12\mul 3\mul 4+5   	             \\
   103_{10} & 6 & (1+2)^3\mul 4-5 	                 \\
   107_{10} & 6 & -1-2+34\mul 5  	                 \\
   109_{10} & 6 & 1-2+34\mul 5 	                     \\
   113_{10} & 6 & 1+2+34\mul 5 	                     \\
   127_{10} & 5 & -1+2^{3+4}  	                     \\
   131_{10} & 7 & 12+3^4+56  	                     \\
   137_{10} & 6 & 1^2\mul 345  	                     \\
   139_{10} & 6 &  1\mul 2+345  	                 \\
   149_{10} & 7 & 12\mul 3\mul 4+56 	             \\
   151_{10} & 6 & -1+2\mul (3^4-5)  	             \\
   157_{10} & 6 & 1\mul 2\mul 3^4-5  	             \\
   163_{10} & 5 & 1+2\mul 3^4 	                     \\
   167_{10} & 6 & 1\mul 2\mul 3^4+5 	             \\
   173_{10} & 6 & 1+2\mul (3^4+5)                    \\
\end{tabular}

\end{multicols}
\caption{Smallest base a prime is sequentially representable in for an increasing representation.} \label{tab:sminc}
\end{table}



\para{Limited Operations}
The flexibility gained in sequential representations as the base gets larger is evident, and will continue for larger bases. Each new number exponentially increases the number of combinations.

Of interest would be to prove some estimates about the first number not expressible for a given base under certain operations such as just addition/subtraction, just addition/multiplication, just concatenation/exponentiation, etc.

\begin{itemize} \small \setlength\itemsep{.1em}
 \item How many unique numbers, given the operations above, can a given base generate?
 \item How many integers, given the operations above, can a given base generate?
 \item How many ways, given the operations above, can a given number be uniquely represented sequentially (ignoring parenthetical differences)?
 \item For $n \ge 6$, is the first missing decreasing number always greater than the first missing increasing number? Is there a way to determine if increasing or decreasing will not express a number first?
 \item Does a sequential representation exist with a set of operations $O$ in base $b$?
 \item For a given number, what bases can represent it?
\end{itemize}

\para{Continuing Problems}
All of the listed problems so far are also open to questions about the complexity. What is the complexity of finding the smallest base that a number $N$ can be sequentially represented in? To slightly extend the question, given an $N$, what bases can it be represented in? Further, given the computational domain, what is the solution to some of these questions if limited strictly to integer arithmetic?

There are many more open questions related to this problem in recreational mathematics. Such as noting that we, along with the original authors, focused solely on positive integers. All of these questions are open for rational, real, or other sets of numbers. 

\section*{Acknowledgements}
The author would like to thank those that have already given feedback on the paper, which helped improve the quality substantially. Most notably the dynamic programming improvements. The original version was optimized for limited memory systems, but took significantly longer.












\newpage
\appendix

\section{Minimal Source Code} \label{app:source}
The source code listed here has been shortened for clarity (with acknowledgements to others). For base 10, it takes hours to compute the dictionary and over 50 Gigabytes of memory to run, so saving to a file may be more efficient in order to not recompute the dictionary everytime. However, brute forcing using limited memory takes significantly longer, but can be done. A text file with a single representation for every value is about 20 Gigabytes for base 10.

\lstinputlisting[language=Python]{paprev2.py}

\end{document}